\newtheorem{theorem}{Theorem}[section]
\newtheorem{proposition}[theorem]{Proposition}
\newtheorem{corollary}[theorem]{Corollary}
\newtheorem{definition}{Definition}[section]
\newtheorem{openproblem}{Open Problem}[]
\theoremstyle{definition}
\newtheorem{remark}[theorem]{Remark}
\newtheorem{example}[theorem]{Example}
\def\N{\mathbb N}
\def\R{\mathbb R}
\title[{\tiny Infinite pointwise lineability: general criteria and applications}]{Infinite pointwise lineability: general criteria and applications}
\author{M.C.~Calder\'on-Moreno, P.J.~Gerlach-Mena, J.A.~Prado-Bassas}
\thanks{The first and third authors
have been partially supported by the Plan Andaluz de Investigaci\'on de la Junta de Andaluc\'{\i}a FQM-127 and by FEDER grant US-1380969.}
\date{}
\begin{document}

\keywords{Lineability, spaceability, pointwise lineability, differentiable, unbounded, integrable.}
\subjclass[2020]{46B87, 15A03, 26B15, 26E10}
\maketitle

\begin{abstract}
\noindent In this paper we introduce the concept of infinite pointwise dense lineability (spaceability), and provide a criterion to obtain density from mere lineability. As an application, we study the linear and topological structures within the set of infinite differentiable and integrable functions, for any order $p \geq 1$, on $\R^N$ which are unbounded in a pre-fixed set.
%\vskip .15cm
%\noindent {\sl Key words and phrases: Lineability, spaceability, infinitely, pointwise, differentiable, unbounded, integrable.}
\end{abstract}

\section{Introduction}

Lineability, introduced by V.I. Gurariy \cite{gurariy1966}, studies the existence of linear structures within sets with nonlinear properties. Formally, a subset $M$ of a vector space $X$ is $\alpha$-lineable if $M \cup \{0\}$ contains an $\alpha$-dimensional subspace $W$ of $X$, where $\alpha$ denotes any cardinal number. If additionally $X$ is endowed with a topology and $W$ is dense in $X$ (respectively, closed) we say that $M$ is $\alpha$-dense lineable (respectively, $\alpha$-spaceable) in $X$.

In the last years many examples about the existence of such structures have been provided (see \cite{aronbernalpellegrinoseoane2016}, \cite{bernalpellegrinoseoane2014}).

In fact, the search was initially for the existence of linear structures in specific cases of known spaces such as, for example, continuous nowhere differentiable functions \cite{weierstrassmonsters}, everywhere surjective functions \cite{everywheresurjective} or $p$-integrable functions that are not $q$-integrable for any $q\le p$ \cite{bernal2010}.

Recently, there has been a shift towards searching for more general results related to new linear behaviors, such as vector spaces containing any pre-fixed vector or algebraic structures of higher dimension than a given one. Furthermore, these results often come with applications to specific cases. Some of the results obtained in this regard can be found in \cite{dinizraposo, favaropellegrinotomaz,leonetirusosomaglia,papathanasiou}.

To be more concrete, D. Pellegrino and A. Raposo Jr. \cite{pellegrinoraposo2021} introduced a pointwise type of lineability as follows:

A subset $M$ of a (topological) vector space $X$ is called pointwise $\alpha$-(dense) lineable if for each $x \in M$, there is a (dense) $\alpha$-dimensional subspace $W_x$ such that
$$x \in W_x \subset M \cup \{0\}.$$
If $W_x$ is a closed $\alpha$-dimensional subspace, we say that $M$ is pointwise $\alpha$-spaceable. If $\alpha = {\rm dim}(X)$, we say that $M$ is maximal pointwise (dense) lineable (spaceable). It is clear that these pointwise notions imply the respective first ones, and that both concepts of (pointwise) dense lineability and spaceability are (strictly) stronger than mere lineability. Moreover there are only a few results which provide some (sufficient) additional conditions to ``jump" to density or spaceability from lineability.

In this paper we introduce the concept of {\em infinite pointwise (dense) lineability (spaceability)}, which relates to the existence of infinitely many vector spaces of infinite dimension with the above definitions. Within these, we provide criteria that allow us to obtain denseness of the corresponding vector spaces from mere lineability, which will be a helpful tool to obtain existence of large linear structures within certain families of functions.

As an application, we consider the family of infinitely differentiable, integrable functions on $\R^N$ which are unbounded on a pre-fixed set, and we show its maximal infinite pointwise (dense) lineability, as well as its spaceability. With this we continue and complete a number of previous and recent results about the set of continuous, unbounded and integrable functions on $[0, + \infty)$ (see \cite{calderongerlachprado2019}, \cite{favaropellegrinoraposoribeiroPREPRINT}).

\section{Infinite pointwise lineability: main definitions and general criteria}

Inspired by \cite{konidasPREPRINT}, \cite{konidasnestoridisPREPRINT}, and the notion of pointwise $\alpha$-lineability we introduce the following definition.

%The techniques developed in \cite{favaropellegrinoraposoribeiroPREPRINT} led us to further investigate the linear and algebraic structures within similar functions in $\R^N$. Furthermore, inspired by the results of \cite{konidasPREPRINT} and \cite{konidasnestoridisPREPRINT} we introduce the following definitions:

\begin{definition} \label{infinitepointwisedefinition}
Let $X$ be a vector space, $\alpha$ an infinite cardinal number and $M \subset X$.

\begin{enumerate}[\rm 1.]\everymath{\displaystyle}
\item We say that $M$ is {\em infinitely pointwise $\alpha$-lineable} if, for every $x \in M$, there exists a family $\mathcal{M} = \{W_k\}_{k \in \N}$ of vector subspaces such that for each $k \in \N$:

\begin{enumerate}[\rm (i)]\everymath{\displaystyle}
\item $\dim (W_k) = \alpha$,
\item $x \in W_k \subset M \cup \{0\}$, and
\item $W_k \cap W_l = {\rm span} \{x\}$ for any $l \in \N$ with $l \neq k$.
\end{enumerate}

\item If additionally $X$ is endowed with a topology and each vector space $W_k \in \mathcal{M}$ ($k \in \N$) is dense in $X$, we say that $M$ is {\em infinitely pointwise $\alpha$-dense lineable} in $X$.
\end{enumerate}
\end{definition}

It is not difficult to get infinite vector subspaces from a vector space of infinite dimension. Indeed, we have only to divide it in an adequate way. So $\alpha$-lineability implies ``infinite" $\alpha$-lineability. The same property happens to be true for the pointwise case. Let us include, in order to be self-contained, the proof of this.

\begin{proposition} \label{infinitepointwise}
Let $X$ be a vector space, $\alpha \geq \aleph_0$, and $M \subset  X$.
Then $M$ is pointwise $\alpha$-lineable if, and only if, it is infinitely pointwise $\alpha$-lineable.
\end{proposition}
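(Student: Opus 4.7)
The plan is as follows. The reverse implication is immediate: if $M$ is infinitely pointwise $\alpha$-lineable and $x \in M$, then the first subspace $W_1$ of the associated family $\mathcal{M}$ already witnesses pointwise $\alpha$-lineability at $x$, so the content lies in the forward direction.

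To prove that pointwise $\alpha$-lineability implies the infinite version, I would fix $x \in M$ and apply the hypothesis to obtain an $\alpha$-dimensional subspace $W$ with $x \in W \subset M \cup \{0\}$. When $x \neq 0$, extend $\{x\}$ to a Hamel basis $B = \{x\} \cup \{e_\gamma : \gamma \in \Gamma\}$ of $W$; since $\dim(W) = \alpha$, one has $|\Gamma| = \alpha$. Because $\alpha$ is infinite, $\alpha = \alpha \cdot \aleph_0$, so $\Gamma$ admits a partition $\Gamma = \bigsqcup_{k \in \N} \Gamma_k$ with $|\Gamma_k| = \alpha$ for every $k \in \N$. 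Define
\[
W_k := {\rm span}\bigl(\{x\} \cup \{e_\gamma : \gamma \in \Gamma_k\}\bigr), \qquad k \in \N.
\]
The case $x = 0$ is analogous, working with an arbitrary Hamel basis of $W$ and noting that ${\rm span}\{x\} = \{0\}$.

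To verify Definition~\ref{infinitepointwisedefinition}, conditions (i) and (ii) are built into the construction: $\dim(W_k) = \alpha$ and $x \in W_k \subset W \subset M \cup \{0\}$. For (iii), any $v \in W_k \cap W_l$ with $k \neq l$ admits two finite expansions in the basis $B$, one supported on $\{x\} \cup \{e_\gamma : \gamma \in \Gamma_k\}$ and one on $\{x\} \cup \{e_\gamma : \gamma \in \Gamma_l\}$. Equating them and using $\Gamma_k \cap \Gamma_l = \emptyset$ together with the linear independence of $B$ forces every $e_\gamma$-coefficient on both sides to vanish, leaving $v \in {\rm span}\{x\}$. The only non-routine ingredient is the cardinal-arithmetic partition of $\Gamma$ into countably many subsets of full cardinality $\alpha$; this is the step that guarantees each $W_k$ attains the target dimension $\alpha$ rather than some smaller infinite cardinal, and it is the place where the hypothesis $\alpha \geq \aleph_0$ is genuinely used.
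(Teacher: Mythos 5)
Your proposal is correct and follows essentially the same route as the paper: extract a basis of $W$ containing $x$, partition the remaining $\alpha$ basis vectors into countably many pieces of cardinality $\alpha$ (using $\alpha \geq \aleph_0$), and span each piece together with $x$; the verification of conditions (i)--(iii) is the same linear-independence argument. The only difference is that you explicitly treat the degenerate case $x=0$, which the paper glosses over.
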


\begin{proof}The only if part is obvious since, in general, infinitely pointwise notions imply ordinary pointwise notions. So let us proof the if part.

Since $M \subset X$ is pointwise $\alpha$-lineable, for each $x \in M$ there is a vector space $W \subset M \cup \{0\}$ such that $\textrm{dim}(W) = \alpha$ and  $x \in W$. So, there exists a set $I$ with $\textrm{card}(I) = \alpha$ and $\{ \omega_i \, : \, i \in I\}$ such that $x$ and $\omega_i$'s are linearly independent and  $W = \textrm{span}( \{ \omega_i \, : \, i \in I\} \cup \{x\})$. Now, since $\alpha\ge\aleph_0$, we can split $I$ into infinitely many pairwise disjoint subsets $I_k$ ($k\in\N$), each one with cardinality $\alpha$. Thus, by considering the vector spaces $W_k$ given by
$$W_k := \textrm{span} \left( \{\omega_i \, : \, i \in I_k\} \cup \{x\} \right),$$
we have that $W_k \subset M \cup \{0\}$, $\textrm{dim}(W_k) = \alpha$ for every $k \in \N$ and, because of the linear independence of $x$ and $\omega_i's$, $W_k \cap W_l = \textrm{span}\{x\}$ for all $k \neq l$, and so the infinite pointwise $\alpha$-lineability of $M$ is proved.
\end{proof}

\begin{remark} \label{remarkplus}
Observe that from the above proof we have that if a set $M$ is infinitely pointwise $\alpha$-lineable then for any $x \in M$ there exists a family $\mathcal{M} = \{ W_k\}_{k \in \N}$ of vector spaces satisfying conditions (i), (ii) and (iii) of Definition \ref{infinitepointwisedefinition}(a), and additionally the following fourth condition:
$$W_k + W_l \subset M \cup \{ 0 \} \quad \text{ for any } k, l \in \N .$$
\end{remark}

The case of dense-lineability is not so clear, since denseness may not be inherited by the infinitely many vector spaces constructed. Recall that (see \cite[Definition 2.1]{arongarciaperezseoane2009} or \cite[Theorem 2.1]{bernalordonez2014}) if $M$ and $N$ are subsets of some vector space $X$, then $M$ is said to be stronger than $N$ if $M + N \subset M$.

%\begin{theorem}\label{infinitelypointwiselineabilitytodense}
%Let $X$ be a metrizable separable topological vector space, and $\alpha$ be an infinite cardinal number. Let $M, N \subset X$ be two subsets such that $M$ is stronger than $N$, $M \cap N = \varnothing$, and $N$ is dense lineable in $X$. Then:
%
%\begin{enumerate}[\rm (a)]\everymath{\displaystyle}
%\item If $M$ is pointwise $\alpha$-lineable, then $M$ is pointwise $\alpha$-dense lineable in $X$.
%\item If $M$ is infinitely pointwise $\alpha$-lineable, then $M$ is infinitely pointwise $\alpha$-dense lineable in $X$.
%\end{enumerate}
%\end{theorem}
\begin{theorem}
\label{infinitelypointwiselineabilitytodense}
Let $X$ be a metrizable separable topological vector space, and $\alpha$ be an infinite cardinal number, and $M$ be a nonempty subset of $X$ for which there is a nonempty subset $N$ of $X$ such that
\begin{enumerate}[\rm (i)]
\item $M$ is stronger than $N$;
\item $M\cap N=\varnothing$;
\item $N$ is dense-lineable.
\end{enumerate}
If $M$ is pointwise $\alpha$-lineable, then $M$ is infinite pointwise $\alpha$-dense lineable (and therefore pointwise $\alpha$-dense lineable).
\end{theorem}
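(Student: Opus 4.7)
The plan is to produce, for each $x\in M$, a family $\{W_k'\}_{k\in\N}$ of $\alpha$-dimensional dense subspaces with $x\in W_k'\subset M\cup\{0\}$ and $W_k'\cap W_l'=\textrm{span}\{x\}$ when $k\neq l$. First, since $M$ is pointwise $\alpha$-lineable, Proposition \ref{infinitepointwise} together with Remark \ref{remarkplus} supplies an initial family $\{W_k\}_{k\in\N}$ satisfying $\dim(W_k)=\alpha$, $x\in W_k\subset M\cup\{0\}$, $W_k\cap W_l=\textrm{span}\{x\}$ and, crucially, $W_k+W_l\subset M\cup\{0\}$ for $k\neq l$. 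The goal becomes to \emph{densify} each $W_k$ without destroying these relations.

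As raw material, I would fix an infinite-dimensional subspace $V\subset X$ with $V\setminus\{0\}\subset N$ dense in $X$ (using hypothesis (iii)), and then inductively construct, for each $k\in\N$, a sequence $\{v_n^{(k)}\}_{n\in\N}\subset V\setminus\{0\}$ dense in $X$, such that the whole collection $\{v_n^{(k)}:k,n\in\N\}$ is linearly independent. This is possible because $X$ has a countable basis of open sets and $V\cap U$ is not contained in any finite-dimensional subspace for any nonempty open $U$ (otherwise a closed finite-dimensional subspace would contain the open set $U$, forcing $X$ itself to be finite-dimensional; the finite-dimensional case of the theorem is vacuous).

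For each $k$, pick a Hamel basis of $W_k$ containing $x$, extract a countably infinite subset $\{w_n^{(k)}\}_{n\in\N}$ of it distinct from $x$, choose positive scalars $\eps_n^{(k)}$ with $\eps_n^{(k)}w_n^{(k)}\to 0$ in $X$ (available from continuity of scalar multiplication in the metrizable space $X$), and set
$$\tilde w_n^{(k)}:=\eps_n^{(k)}w_n^{(k)}+v_n^{(k)}.$$
Define $W_k'$ as the span of $\{x\}$, the vectors $\tilde w_n^{(k)}$, and all remaining original basis vectors of $W_k$. Density of $W_k'$ follows because any $y\in X$ is the limit of a subsequence $v_{n_j}^{(k)}\to y$, whence $\tilde w_{n_j}^{(k)}\to y$ as well. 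The identities $\dim W_k'=\alpha$ and $W_k'\subset M\cup\{0\}$ both come from decomposing a typical element as $u=u_1+u_2$ with $u_1\in W_k$ and $u_2\in V$: since $W_k\cap V\subset(M\cup\{0\})\cap(N\cup\{0\})=\{0\}$ by hypothesis (ii), the two pieces can only vanish together, which yields simultaneously the linear independence of the new spanning set and, in the nonzero case, $u=u_1+u_2\in M+N\subset M$ by hypothesis (i).

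The step I expect to be the most delicate is verifying $W_k'\cap W_l'=\textrm{span}\{x\}$ for $k\neq l$. Given $u\in W_k'\cap W_l'$ with decompositions $u=u_1+u_2=u_1'+u_2'$, where $u_1\in W_k$, $u_1'\in W_l$, $u_2\in V_k:=\textrm{span}\{v_n^{(k)}:n\in\N\}$ and $u_2'\in V_l$, the difference $u_1-u_1'$ lies in $W_k+W_l\subset M\cup\{0\}$ while $u_2'-u_2\in V\subset N\cup\{0\}$; hypothesis (ii) forces both differences to vanish, and the cross-$k$ linear independence of the $v_n^{(k)}$'s then yields $u_2=u_2'=0$, leaving $u\in W_k\cap W_l=\textrm{span}\{x\}$. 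This is precisely where the sum condition in Remark \ref{remarkplus} is indispensable, and it is the reason for invoking Proposition \ref{infinitepointwise} together with the remark at the outset rather than densifying a single $W$ directly.
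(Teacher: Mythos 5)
Your proof is correct, and its skeleton is the same as the paper's: start from the family $\{W_k\}_k$ given by Proposition \ref{infinitepointwise} and Remark \ref{remarkplus}, perturb (rescaled) generators of each $W_k$ by elements of a dense subspace $V\subset N\cup\{0\}$, and exploit the decomposition of a generic element into a $W_k$-part plus a $V$-part, with hypotheses (i) and (ii) forcing the two parts to vanish only simultaneously; this yields density, $\dim W_k'=\alpha$, and $W_k'\subset M\cup\{0\}$ exactly as in the paper. The genuine divergence is in the step $W_k'\cap W_l'=\mathrm{span}\{x\}$: the paper reuses a \emph{single} sequence $(v_n)_n\subset V$ for every $k$ and, after both sides of the resulting identity are forced to be zero, falls back on $W_k\cap W_l=\mathrm{span}\{x\}$ plus linear independence inside each $W_k$; you instead build $k$-indexed dense sequences $\{v_n^{(k)}\}_n\subset V$ that are jointly linearly independent across $k$, so that the $V$-components cancel outright via $\mathrm{span}\{v_n^{(k)}\}\cap\mathrm{span}\{v_n^{(l)}\}=\{0\}$. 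Your extra inductive construction is sound (second countability of a separable metrizable space, plus the fact that $V\cap U$ cannot lie in a finite-dimensional subspace unless $X$ is finite-dimensional, in which case the theorem is vacuous for infinite $\alpha$), and it buys a cleaner intersection argument; the paper's version shows the cross-$k$ independence is not actually needed. One sentence deserves tightening: ``the two pieces can only vanish together'' is false in the direction $u_2=0\Rightarrow u_1=0$ (take $u=x$). What hypothesis (ii) actually gives is that $u=0$ forces $u_1=-u_2\in W_k\cap V=\{0\}$ (linear independence), while for the containment $W_k'\subset M\cup\{0\}$ you need the separate observation that $u_1=0$ annihilates all coefficients of the $\tilde w_n^{(k)}$ (by independence of the Hamel basis of $W_k$ and $\varepsilon_n^{(k)}\neq 0$) and hence forces $u_2=0$, so the dangerous case $u=u_2\in N$ never occurs; your construction does deliver this, it is just not what the quoted sentence literally argues.
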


\begin{proof}
 Since $X$ is separable there exists a sequence $(x_n)_n \subset X$ such that the set $\{x_n \, : \, n \in \N\}$ is dense in $X$, where we can assume without loss of generality that $x_1 = 0$.

Now, since $M$ is pointwise $\alpha$-lineable, by Proposition \ref{infinitepointwise} it is infinitely pointwise $\alpha$-lineable. Hence, for every $x \in M$, there exists a family $\mathcal{M} = \{W_k\}_{k \in \N}$ of vector spaces such that $x \in W_k \subset M \cup \{0\}$ with $\textrm{dim}(W_k) = \alpha$ for every $k \in \N$, and $W_k \cap W_l = \textrm{span}\{x\}$ for every $k \neq l$. By Remark \ref{remarkplus}, we assume that $W_k + W_l \subset M \cup \{ 0 \}$ for any $k, l \in \N $.

Since each $W_k$ is a vector space, for each $k \in \N$ there exists a linearly independent set $\{\omega_i^{(k)} \, : \, i \in I\}$ with $\textrm{card}(I) = \alpha$ and $1\in I$ such that
\[ W_k = \textrm{span} \{\omega_i^{(k)} \, : \, i \in I\},\]
where we can assume without loss of generality that $\omega_1^{(k)} = x$ for every $k \in \N$.

Due to the fact that $\alpha$ is an infinite cardinal, we can split $I$ into infinitely many pairwise disjoint nonempty sets $I_n$ ($n \in \N$), where $1\in I_1$.

Now, fix $k,n \in \N$ and $i \in I_n$. Since multiplication by scalars is a continuous operation in a topological vector space, there exists $\varepsilon^{(k)}_i > 0$ such that
\[ d(\varepsilon^{(k)}_i \omega^{(k)}_i, 0) < \dfrac{1}{n}, \]
where $d$ denotes a fixed translation invariant metric on $X$.

On the other hand, $N$ is dense lineable in $X$, so there exists a vector subspace $V \subset N \cup \{0\}$ with $V$ dense in $X$. Now, for each $n \in \N$, the denseness of $V$ guarantees the existence of $v_n \in V$ such that
\[d(v_n,x_n) < \dfrac{1}{n},\]
where we can choose $v_1 := 0$ (recall that $x_1=0$).

Now, define the elements $x^{(k)}_{n,i}$ as follows:
\[x^{(k)}_{n,i} := v_n + \varepsilon^{(k)}_i \omega^{(k)}_i, \quad k, n \in \N, i \in I_n,\]
so that we consider the vector space $W^{(k)}$ generated by them, that is:
\[W^{(k)} := \text{span} \{x^{(k)}_{n,i} \, : \, n \in \N, i \in I_n\}.\]

We will show that for every $k\in\N$, $x \in W^{(k)}$, $W^{(k)}$ is dense in $X$, $W^{(k)} \subset M \cup \{0\}$, $\textrm{dim}(W^{(k)}) = \alpha$ and $W^{(k)}\cap W^{(l)}=\textrm{span}\{x\}$ for any $l\in\N$ with $l\ne k$.

From now on, let $k\in\N$ fixed.

\begin{enumerate}[\rm (1)]\everymath{\displaystyle}
\item Since $1 \in I_1$ we have that:
\[v_1 + \varepsilon^{(k)}_1 \omega^{(k)}_1 = \varepsilon^{(k)}_1 \omega^{(k)}_1 = \varepsilon^{(k)}_1 x \in W^{(k)}.\]
Thus,
\[x = \omega^{(k)}_1 = \dfrac{1}{\varepsilon^{(k)}_1} (v_1 + \varepsilon^{(k)}_1 \omega^{(k)}_1) \in W^{(k)}.\]

\item Now, in order to prove the density of $W^{(k)}$ in $X$, let us fix $n \in \N$ and take some $i_n \in I_n$. By considering $u^{(k)}_n := x^{(k)}_{n,i_n}$, we have that
\begin{eqnarray*}
d(u^{(k)}_n, x_n) & \leq & d(u^{(k)}_n,v_n) + d(v_n,x_n) \\[1em]
& = & d(v_n + \varepsilon^{(k)}_{i_n} \omega^{(k)}_{i_n}, v_n) + d(v_n,x_n) \\[1em]
& = & d(\varepsilon^{(k)}_{i_n} \omega^{(k)}_{i_n},0) + d(v_n,x_n) \\[1em]
& < & \dfrac{1}{n}+\dfrac{1}{n} = \dfrac{2}{n} \to 0 \qquad (n \to \infty).
\end{eqnarray*}
Since $(x_n)_n$ is dense in $X$, we get that $(u^{(k)}_n)_n$ is also dense in $X$, and the same holds true for $W^{(k)}$.

\item Fix $\omega \in W^{(k)} \setminus \{0\}$. There are scalars $c_1, c_2, \ldots, c_s$ with $c_s \neq 0$, as well as indices $i_r \in I_r$ ($r = 1, 2, \ldots, s$), such that
$$\omega = c_1 x^{(k)}_{1,i_1} + c_2 x^{(k)}_{2,i_2} + \ldots + c_s x^{(k)}_{s,i_s}.$$
But by the definition of $(x^{(k)}_{n,i})_{n,i}$ we have that
$$\omega = y_0 + z^{(k)}_0,$$
where $$y_0 := c_1 v_1 + c_2 v_2 + \ldots + c_s v_s,$$
$$z^{(k)}_0 := c_1 \varepsilon^{(k)}_{i_1} \omega^{(k)}_{i_1} + c_2 \varepsilon^{(k)}_{i_2} \omega^{(k)}_{i_2} + \ldots + c_s \varepsilon^{(k)}_{i_s} \omega^{(k)}_{i_s}.$$
Recall that $v_1, v_2, \ldots, v_s \in V$, which is a vector space, so $y_0 \in V \subset N \cup \{0\}$. Analogously, $\omega^{(k)}_{i_1}, \omega^{(k)}_{i_2}, \ldots, \omega^{(k)}_{i_s} \in W_k$, they are linearly independent, and $c_s \varepsilon^{(k)}_{i_s} \neq 0$, so $z_0 \in W_k \setminus \{0\} \subset M$. If $y_0 = 0$, then $\omega = z_0 \in M$. If $y_0 \neq 0$, then $\omega = y_0 + z_0 \in N  + (W_k \setminus \{0\}) \subset N + M \subset M$ because $M$ is stronger than $N$, and we have $W^{(k)} \subset M \cup \{0\}$.

\item Let us show now that $\textrm{dim}(W^{(k)}) = \alpha$. For this, it is clear that
$$\textrm{card}\big( \{ (n,i) \, : \, n \in \N, i \in I_n\} \big) = \textrm{card} \left( \bigcup_{n=1}^\infty I_n \right) = \textrm{card}(I) = \alpha.$$
So, if we prove that the vectors of $\{ x^{(k)}_{n,i} \, : \, n \in \N, i \in I_n\}$ are linearly independent we are done. Indeed, assume by way of contradiction that
$c_1x^{(k)}_{1,i_1}+c_2x^{(k)}_{2,i_2} + \ldots + c_sx^{(k)}_{s,i_s} = 0$ with $c_s \neq 0$. As done before (and following the same notation), we have that $y_0 + z^{(k)}_0 = 0$, where $y_0 \in V $ and $z^{(k)}_0 \in W_k \setminus \{0\}$. But then, $y_0 = - z^{(k)}_0 \in W_k \setminus \{0\}$, since $W_k$ is a vector space. Hence, we have that
$$y_0 \in  \big(W_k \setminus \{0\}\big) \cap V  \subset M \cap N = \varnothing,$$
which is a contradiction.

\item It only remains to prove that $W^{(k)} \cap W^{(l)} = \textrm{span} \{x\}$ for every $l \neq k$.

With this aim, let $\omega \in W^{(k)} \cap W^{(l)}$. Since $\omega$ is in each of the vector spaces, we can write it as:
\[
\begin{array}{cclcl}
\omega & = & \alpha \cdot x_{1,1}^{(k)}+\sum_{s=1}^p \alpha_{s} x_{n_s,i_s}^{(k)} & = & \alpha\cdot \varepsilon_1 + \sum_{s=1}^p \alpha_{s} v_{n_s} + \sum_{s=1}^p \alpha_{s} \varepsilon_{i_s}^{(k)} \omega_{i_s}^{(k)}, \\[1.5em]

\omega & = & \beta \cdot x_{1,1}^{(l)}+ \sum_{r=1}^q \beta_{r} x_{n_r,j_r}^{(l)} & = & \beta \cdot \varepsilon_1 + \sum_{r=1}^q \beta_{r} v_{n_r} + \sum_{r=1}^q \beta_{r} \varepsilon_{j_r}^{(l)} \omega_{j_r}^{(l)},
\end{array}
\]
where $(n_s,i_s) \ne (1,1) \ne (n_r,j_r)$ for any $s$, $r$. Hence
\[
\sum_{s=1}^p \alpha_{s} v_{n_s} - \sum_{r=1}^q \beta_{r} v_{n_r} = (\beta - \alpha ) \varepsilon_1 x + \sum_{r=1}^q \beta_{r} \varepsilon_{j_r}^{(l)} \omega_{j_r}^{(l)} - \sum_{s=1}^p \alpha_{s} \varepsilon_{i_s}^{(k)} \omega_{i_s}^{(k)}.
\]
Observe that the left hand side is in $V \subset N \cup \{0\}$, and the right hand side is in $W_k+ W_l \subset M \cup \{0\}$, and since $M \cap N = \varnothing$, each term of the above equality must be zero. So,
\[
\alpha \varepsilon_1 x +\sum_{s=1}^p \alpha_{s} \varepsilon_{i_s}^{(k)} \omega_{i_s}^{(k)} = \beta  \varepsilon_1 x + \sum_{r=1}^q \beta_{r} \varepsilon_{j_r}^{(l)} \omega_{j_r}^{(l)} =: \gamma \cdot x ,
\]
because the left hand side is in $W_k$, the right hand is in $W_l$ and $W_k \cap W_l = {\rm span}\{x\}.$ Now, $\varepsilon$'s are nonnull and the linear independence of the $x$, $\omega_i^{(k)}$'s and of the $x$, $\omega_j^{(l)}$'s, gives us $\alpha = \beta =\frac{\gamma}{\varepsilon_1}$, and $\alpha_s =0 = \beta_r$. Thus, $c= \gamma \cdot x \in {\rm span}\{x\}$, as required.
\end{enumerate}
\end{proof}

\begin{remark}
  Observe that, under the hypotheses of the above theorem, (pointwise) $\alpha$-\emph{dense} lineability implies \emph{infinite} (pointwise) $\alpha$-\emph{dense} lineability. In fact, although there exist many examples of dense-lineable sets $M$, for many of them there exists a set $N$ enjoying conditions (i), (ii) and (iii) of Theorem \ref{infinitelypointwiselineabilitytodense} (see \cite[\S 7.3]{aronbernalpellegrinoseoane2016}).

  Up to now, we do not know if this fact remains true for a general dense-lineable set $M$ in a general topological vector space $X$. So, we propose the following question.
\end{remark}

\begin{openproblem}
  Let $X$ be a topological vector space and $M\subset X$ be a (pointwise) $\alpha$-dense lineable set. Is $M$ always infinite (pointwise) $\alpha$-dense lineable?
\end{openproblem}

\section{Linear and topological structures of the set of continuous, unbounded and integrable functions on $\R^N$}

Let $N \in \N$. Throughout this section, we use the following notation:
\begin{enumerate}
\item $C^\infty (\R^N)$ represents the set of all real functions on $\R^N$ that are infinitely many times differentiable on $\R^N$. This becames a Fr\'echet space when endowed with the topology of uniform convergence on compacta for all partial derivatives of all orders, see \cite{Horvath}.
    \item $L^p(\R^N)$  ($p \in [1, + \infty)$) denotes the vector space of all (classes of) functions $\R^N \to \R$ that are $p$-integrable Lebesgue on $\R^N$. This becomes a Banach space under the $p$-norm
        $$||f||_{L^p} := \left( \int_{\R^N} |f|^p \, dx_1 \cdots dx_N \right)^{1/p}.$$
        \item For each multi-index $\alpha = ( \alpha_1, \dots , \alpha_N)\in (\N \cup \{0\})^N$, we set $|\alpha | := \alpha_1 + \cdots + \alpha_N$.
\item For each $x \in \R^N$, $||x||$ will stand for the classical euclidean norm on $\R^N$.
\end{enumerate}

From now on we consider the space of functions $X$ given by
        $$X:= C^\infty (\R^N) \cap \bigcap_{p \geq 1} L^p (\R^N).$$
    Observe that the formula
    $$||f||_m = \max_{|\alpha | \leq m} \sup_{ ||x|| \leq m} |D^\alpha f(x) | \qquad (f \in C^\infty (\R^N), \ m=1,2,\dots)$$
    defines an increasing sequence of seminorms generating the natural Fr\'echet topology of $C^\infty (\R^N)$. Here $D^\alpha$ denotes the partial differential operator of order $\alpha$. With this and the fact that $L^p (\R^N) \cap L^q (\R^N) \subset L^r (\R^N)$ whenever $1 \leq p \leq r \leq q < + \infty$, we can consider a natural translation invariant metric $d_X$ in $X$ given by:
$$d_X(f,g):=  \sum_{m=1}^\infty \frac{1}{2^m} \cdot \frac{||f-g||_{m}}{1 + ||f-g||_{m}} + \sum_{p=1}^\infty \frac{1}{2^p} \cdot \frac{||f-g||_{L^p}}{1 + ||f-g||_{L^p}}.$$
We have that $(X,d_X)$ is a Fr\'echet space and convergence in $d_X$ is equivalent to uniform convergence on compacta for all partial derivatives of all orders and convergence in $p$-norm for every $p \in [1, + \infty)$.

In this space of functions $X$ we shall search for unbounded functions in a pre-fixed not relatively compact subset $A \subset \R^N$. Let us show first that we can always find such a function.

\begin{example} \label{example}

Let $A \subset \R^N$ be a not relatively compact set in $\R^N$. Then there exists a sequence $(a_n)_n \subset A$ such that $\|a_n\|$ strictly increases to $+\infty \quad (n \to \infty)$, and $\| a_{n+1} - a_n\| >1$. Therefore, the closed balls $\overline{B}(a_n, \frac{1}{2^{n}})$ are pairwise disjoint for every $n \in \N$. Now, by the Smooth Urysohn's Lemma \cite[Corollary 1.7.1]{shastri2011}, there exist bump functions $\Phi_n : \R^N \to \R$ such that for each $n \in \N$:
\begin{enumerate}[\rm (a)]
\item $\Phi_n \in C^\infty(\R^N)$,
\item $\Phi_n(x) \in [0,1]$ for all $x \in \R^N$,
\item $\Phi_n(x) = 0$ for all $x \notin \overline{B}(a_n,\frac{1}{2^{n}})$, and
\item $\Phi_n(x) = 1$ for all $x \in \overline{B}(a_n,\frac{1}{2^{n+1}})$.
\end{enumerate}
In particular, since each $\Phi_n$ is bounded and has compact support, we have that $\Phi_n \in L^p(\R^N)$ for every $p \geq 1$, and so $\Phi_n \in X$ for each $n \in \N$.
Now, consider the function
\[ w(x) := \sum_{n=1}^\infty n \Phi_n(x), \quad x \in \R^N. \]
Since the supports of the $\Phi_n$'s are pairwise disjoint, the expression of $w$ does not actually represent an infinite series, but rather each one of the bump functions, that is:
\[ w(x) = \left\{
\begin{array}{cl}
n \Phi_n(x) & \text{if $x \in B(a_n,\frac{1}{2^{n}})$, $n \in \N$}, \\[1em]
0                  & \text{otherwise.}
\end{array}
\right. \]
Thus, $w \in C^\infty(\R^N)$. Moreover,
\begin{eqnarray*}
\| w \|^p_{L^p} & = & \sum_{n=1}^\infty n \| \Phi_n \|^p_{L^p} \leq \sum_{n=1}^\infty n \frac{1}{2^{nN}}< +\infty,
\end{eqnarray*}
so $w \in L^p(\R^N)$ for all $p \geq 1$. Thus $w \in X$. Finally, $w$ is unbounded on $A$. Indeed,
\[ |w(a_n)| = \left| \sum_{m=1}^\infty m \Phi_m(a_n) \right| = n  \Phi_n(a_n)= n \to +\infty \qquad (n \to \infty). \]
\end{example}

From now on, given a not relatively compact subset $A$ in $\R^N$, we denote:
$$nBC^\infty I(A):= \{ f \in X: f \textrm{ is unbounded in } A\}.$$
The main result of this section shows that this set is not only nonempty but even maximal pointwise spaceable. Recall that dim$(X)=\mathfrak{c}$. We will explicitly construct the closed vector space.

\begin{theorem}\label{pointwisecspaceability} Let $A$ be not relatively compact in $\R^N$. Then the set $nBC^\infty I(A)$ is pointwise $\mathfrak{c}$-spaceable in $(X,d_X)$.
\end{theorem}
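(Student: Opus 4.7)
The plan is to construct, for each $f \in nBC^\infty I(A)$, a closed subspace $W_f$ of $(X,d_X)$ of dimension $\mathfrak{c}$ with $f \in W_f$ and $W_f \setminus \{0\} \subset nBC^\infty I(A)$. First I would select test points along which $f$ truly blows up: since $f$ is continuous on $\R^N$ and unbounded on the not relatively compact set $A$, a Bolzano--Weierstrass argument gives a sequence $(a_n)_n \subset A$ with $\|a_n\| \nearrow +\infty$, $\|a_{n+1}-a_n\| > 1$, and $|f(a_n)| \to +\infty$ as fast as needed (a bounded subsequence in $A$ would force $f$ bounded on it, by continuity). Then, via the Smooth Urysohn Lemma as in Example \ref{example}, I would build bump functions $\Phi_n \in X$ supported in pairwise disjoint balls $\overline{B}(a_n, r_n)$ with $\Phi_n(a_n)=1$ and $0 \le \Phi_n \le 1$, shrinking $r_n$ so aggressively that for every fixed $m,p$ the quantities $\|\Phi_n\|_m$ and $\|\Phi_n\|_{L^p}$ decay much faster than any polynomial in $|f(a_n)|$; this guarantees that every formal series $\sum_n b_n \Phi_n$ with $|b_n| \le C |f(a_n)|^k$ converges in $(X,d_X)$.

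The key step is to realise uncountably many independent directions via an almost disjoint family. I fix $\mathcal{F} \subset \mathcal{P}(2\N)$ of cardinality $\mathfrak{c}$ with $|S \cap T| < \infty$ for distinct $S,T \in \mathcal{F}$ (a classical construction), so that every odd index lies outside every $S \in \mathcal{F}$. For each $S \in \mathcal{F}$ put $\psi_S := \sum_{n \in S} f(a_n) \Phi_n \in X$ and define
\[
W_f := \overline{\mathrm{span}\bigl(\{f\} \cup \{\psi_S : S \in \mathcal{F}\}\bigr)},
\]
the closure taken in $(X,d_X)$. Trivially $f \in W_f$. Linear independence of $\{f\} \cup \{\psi_S\}$ follows from almost disjointness: for any finite combination, evaluating at $a_m$ with $m$ lying in a single $S_{i_0}$ and no other $S_j$ yields a nonzero multiple of $f(a_m)$. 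Hence $\dim W_f \ge \mathfrak{c}$, and since $\dim X = \mathfrak{c}$, equality holds.

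The heart of the proof is to verify $W_f \setminus \{0\} \subset nBC^\infty I(A)$. For a finite combination $g = \lambda f + \sum_{i=1}^r c_i \psi_{S_i}$, evaluating at any odd-indexed $a_m$ gives $g(a_m) = \lambda f(a_m)$, while at $m \in S_{i_0} \setminus \bigcup_{j \ne i_0} S_j$ (an infinite set) it gives $g(a_m) = (\lambda + c_{i_0}) f(a_m)$; if $g$ were bounded on $A$ then all these coefficients would have to vanish, forcing $g = 0$. For a general $g \in W_f$, writing $g = \lim_j g^{(j)}$ with $g^{(j)} \in W_0$ and using that convergence in $d_X$ implies pointwise convergence at each $a_m$, I would extract effective scalars $\mu_m := g(a_m)/f(a_m)$ (well defined once $f(a_m) \ne 0$) and show, via the almost disjoint structure, that boundedness of $g$ on $A$ forces these scalars to vanish in a consistent way, hence $g \equiv 0$.

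I expect the main obstacle to be precisely this last step: controlling elements of the closed span that are not finite combinations. Making the effective-coefficient argument rigorous likely requires the decay of $\|\Phi_n\|$ to dominate sufficient powers of $|f(a_n)|$ so that the pointwise values at $(a_n)_n$ uniquely determine a decomposition of $g$, and the almost disjointness of $\mathcal{F}$ to prevent cancellation conspiracies across different $\psi_S$'s. Once this is secured, the four items ``$f \in W_f$'', ``$W_f$ closed'', ``$\dim W_f = \mathfrak{c}$'' and ``$W_f \setminus \{0\} \subset nBC^\infty I(A)$'' complete the proof of pointwise $\mathfrak{c}$-spaceability.
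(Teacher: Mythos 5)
Your choice of test points $(a_n)_n$, the bump functions $\Phi_n$, and the verification that \emph{finite} combinations of $f$ and the $\psi_S$'s are linearly independent and unbounded on $A$ are all sound, and you have correctly located the hard step. But that step is a genuine gap, not a verification to be ``secured'': as described, the inclusion $W_f\setminus\{0\}\subset nBC^\infty I(A)$ can fail. The difficulty is that the evaluation points do not separate your continuum many generators --- a fixed index $m$ may lie in continuum many members of $\mathcal{F}$, and in a limit of finite combinations the participating sets $S_i^{(j)}$ change with $j$, so the ``effective scalars'' $\mu_m=g(a_m)/f(a_m)$ need not come from any single combination. Concretely, let $\mathcal{F}$ be the branch family of the binary tree (nodes enumerated level by level by the even integers, $S_b$ the indices of the initial segments of the branch $b$): the finite combinations $g^{(j)}:=2^{-j}\sum_{|\tau|=j}\psi_{S_{b_\tau}}$, one branch $b_\tau$ through each level-$j$ node $\tau$, have coefficient mass $1$ and, with your decay of the $\|\Phi_n\|$'s, converge in $(X,d_X)$ to the nonzero element $g=\sum_{\sigma}2^{-|\sigma|}f(a_{m(\sigma)})\Phi_{m(\sigma)}$ of $W_f$, which satisfies $|g(a_{m(\sigma)})|=2^{-|\sigma|}|f(a_{m(\sigma)})|$ and vanishes at the odd-indexed points. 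If $|f(a_n)|$ grows slowly in $n$, this $g$ is bounded on $A$. Your hedge ``as fast as needed'' can repair this particular example, but the required rate, and indeed the whole closure argument, depend on structural features of $\mathcal{F}$ (for the tree family, the additivity $\mu_\sigma=\mu_{\sigma 0}+\mu_{\sigma 1}$ of the limit coefficients, which forces $\max_{|\sigma|=k}|\mu_\sigma|\geq 2^{-k}|\mu_\varnothing|$) that an unspecified ``classical'' almost disjoint family simply does not have. Identifying and exploiting those features is the actual content of the proof, and it is absent from your sketch.

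The paper sidesteps all of this by using only \emph{countably} many generators: it partitions $\N$ into $\{j(n)\}_n$ and $\{i(n,k)\}_n$ ($k\in\N$), sets $f_k=\sum_n f(a_{i(n,k)})\Phi_{i(n,k)}$, and works with $W_f=T(\ell_1)$ where $T((\alpha_k)_k)=\alpha_0f+\sum_{k\ge1}\alpha_kf_k$; the dimension $\mathfrak{c}$ comes from $\dim\ell_1=\mathfrak{c}$, not from continuum many generators. Each generator then owns a private infinite set of test points at which all the others vanish, so for any convergent sequence $H_l\to h$ in $\overline{T(\ell_1)}$ every coefficient $\alpha_k^l$ converges (it is recovered by evaluation at the private points of $f_k$), and $h$ is either $0$ or unbounded along one of those private subsequences. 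To complete your proof you should either switch to this countable-generator scheme, or commit to the tree family together with an explicit growth condition such as $2^{-k}\min_{|\sigma|=k}|f(a_{m(\sigma)})|\to\infty$ and carry out the additivity argument for limit coefficients.
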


\begin{proof}
Let $f \in  nBC^\infty I(A)$ be fixed. There exists a sequence $(a_n)_n \subset A$  such that $f(a_n) \to \infty$ as $n \to \infty$. Without loss of generality we can assume that $(\| a_n\|)_n$ is strictly increasing to infinity, $\|a_{n+1} - a_n\| \geq 1$ and $|f(a_n)| > 1$ for all $n \in \N$.

For each $n \in \N$, by considering the closed balls,
\[\begin{array}{rcl}
B_{1,n} & := & \overline{B}\left( a_n, \dfrac{1}{|f(a_n)|^{1/N} 2^{n+1}} \right), \\[1em]
B_{2,n} & := & \overline{B}\left( a_n, \dfrac{1}{|f(a_n)|^{1/N} 2^{n+2}} \right),
\end{array}\]
the Smooth Urysohn's Lemma provides a bump function $\Phi_n \in C^\infty(\R^N)$ such that:

\begin{enumerate}[\rm (a)]\everymath{\displaystyle}
\item $\Phi_n(x) \in [0,1]$ for all $x \in \R^N$,
\item $\Phi_n(x) = 0$ for all $x \notin B_{1,n}$, and
\item $\Phi_n(x) = 1$ for all $x \in B_{2,n}$.
\end{enumerate}
In particular,
\[\|\Phi_n\|_{L^p}^p \leq 2^N \left( \dfrac{1}{|f(a_n)|^{1/N}2^{n+1}} \right)^N = \dfrac{1}{|f(a_n)|\cdot 2^{nN}} < 1,\]
and $\Phi_n \in L^p(\R^N)$ for every $p \geq 1$. Then the $\Phi_n$'s are in $X$ and the supports are pairwise disjoint.

Now, we consider a partition of $\N$ into infinitely many pairwise disjoint subsequences:
\[ \N = \{ j(n) \, : \, n \in \N\} \cup \bigcup_{k = 1}^\infty \{i(n,k)\,:\,n\in\N\},\]
where the sequences $(j(n))_n$ and $(i(n,k))_{n,k}$ are strictly increasing in $n$ and $k$.

For each $k \in \N$, we define the functions $f_k : \R^N \to \R$ by
\[f_k(x) := \sum_{n=1}^\infty f(a_{i(n,k)}) \Phi_{i(n,k)}(x).\]
Observe that, as the supports of $\Phi_{i(n,k)}$'s are pairwise disjoint, for each $x \in \R^N$, there exists a neighbourhood $U$ of $x$, and $n_0, k_0 \in \N$ such that:
\[f_k(x) = f(a_{i(n_0,k_0)}) \Phi_{i(n_0,k_0)}(x), \quad x \in U.\]
So, $f_k \in C^\infty(\R^N)$ for all $k \in \N$, and if we compute its $L^p$-norm, we obtain that
\begin{eqnarray*}
\|f_k\|_{L^p}^p & = & \sum_{n=1}^\infty |f(a_{i(n,k)})|  \|\Phi_{i(n,k)}\|^p_{L^p} \\[1em]
& \leq & \sum_{n=1}^\infty |f(a_{i(n,k)})| \left( \dfrac{1}{|f(a_{i(n,k)})|^{1/N} 2^{i(n,k)}} \right)^N \\[1em]
& = & \sum_{n=1}^\infty \dfrac{1}{2^{i(n,k)N}} \leq  \sum_{n=1}^\infty \dfrac{1}{2^{nN}} \leq 1.
\end{eqnarray*}
Hence, $f_k \in L^p(\R^N)$ for each $p \geq 1$, and $f_k \in X$ ($k \in \N$). Furthermore, for each $k \in \N$,
\[ |f_k(a_{i(n,k)})| = |f(a_{i(n,k)})| \to +\infty \quad (n \to \infty).\]
Thus, the sequence of functions $(f_k)_k \subset nBC^\infty I(A)$.

Let $\ell_1$ be the Banach space of all $1$-summable real sequences. Now, we can define the operator $T : \ell_1 \to nBC^\infty I(A) \cup \{ 0 \}$ given by
\[T((\alpha_k)_{k=0}^\infty) := \alpha_0 f + \sum_{k=1}^\infty \alpha_k f_k.\]
Indeed, since the supports of the $f_k$'s are pairwise disjoint, given $x_0 \in \R^N$, there exists $k_0 \in \N$, and a neighbourhood of $x_0$ where
\[T((\alpha_k)_k) (x) = \alpha_0 f(x) + \alpha_{k_0} f_{k_0}(x), \]
so $T((\alpha_k)_k) \in C^\infty(\R^N)$. On the other hand,
%because of $f_k(x)= 0$ for every $x \not\in \bigcup_{n=1}^\infty B\left( a_{i(n,k)}, \dfrac{1}{|f(a_{i(n,k)})|^{1/N} 2^{n+1}} \right)$, we have that:
\begin{eqnarray*}
\|T((\alpha_k)_k) \|_{L^p} & \leq & |\alpha_0|\cdot  \|f\|_{L^p} + \sum_{k=1}^\infty |\alpha_k|\cdot  \| f_k\|_{L^p} \\[1em]
& \leq & |\alpha_0| \cdot  ||f||_{L^p} + \sum_{k=1}^\infty |\alpha_k| < \infty.
\end{eqnarray*}
It is clear that $T((0,0,\dots))=0$, If $(\alpha_k)_k$ is not the null sequence, then $T((\alpha_k)_k)$ is not bounded in $A$ because either $\alpha_0 \neq 0$, and then
\[|T((\alpha_k))(a_{j(n)})| = |\alpha_0| |f(a_{j(n)})| + 0 \to +\infty \quad (n \to \infty);\]
or $\alpha_0 = 0$, and then there exists $k_0 \in \N$ such that $\alpha_{k_0} \ne 0$ and
\[ |T((\alpha_k)_k)(a_{i(n,k_0)})| = |\alpha_{k_0}||f(a_{i(n,k_0)})| \to +\infty \quad (n \to \infty).\]
Thus the operator $T$ is well defined and injective. Then, for the vector subspace
$$W_f := T(\ell_1)$$
we have
\[ f = T((1,0,0,\ldots))  \qquad {\rm and}  \qquad \textrm{dim}(W_f) = \mathfrak{c}.\]
Observe that this shows that $nBC^\infty I(A)$ is pointwise $\mathfrak{c}$-lineable.

Now, to get the pointwise $\mathfrak{c}$-spaceability it is enough to show that the closure $\overline{W_f}$ of $W_f$ in $(X,d_X)$ satisfies that
\[\overline{W_f} \backslash \{0\} = \overline{T(\ell_1)} \backslash \{0 \} \subset nBC^\infty I(A).\]
For this, consider $h \in \overline{T(\ell_1)} \backslash \{0\}$. Then, there exists $(H_l)_{l } \subset W_f \backslash \{0\}$ such that $H_l \to h$ as $l \to \infty$ in $(X,d_X)$. So, for each $l \in \N$ we can write:
\[H_l = \alpha_0^l f + \sum_{k=1}^\infty \alpha_k^l f_k.\]
Since $(H_l)_l$ converges to $h$ on $(X, d_X)$, we have convergence on compacta in $\R^N$. Therefore, for each $n \in \N$, by considering the singleton (actually compact set) $\{a_{j(n)}\}$ we have that:
\[H_l(a_{j(n)}) = \alpha_0^l f(a_{j(n)}) \to h(a_{j(n)}) \quad (l \to \infty),\]
and
\[\alpha_0^l \to \dfrac{h(a_{j(n)})}{f(a_{j(n)})} =: \alpha_0 \quad (l \to \infty).\]
Hence,
\[h(a_{j(n)}) = \lim_{l \to \infty} H_l(a_{j(n)}) = \alpha_0 f(a_{j(n)}).\]
Now, we arrive at two possible situations depending on $\alpha_0$:
\begin{enumerate}[\rm (i)]\everymath{\displaystyle}

\item If $\alpha_0 \neq 0$ we are done, since $|h(a_{j(n)})| \to \infty$ as $n \to \infty$, and then $h \in nBC^\infty I(A)$.

\item If $\alpha_0 = 0$, let us fix $k_0 \in \N$. For each $n \in \N$, consider the compact set $K_n$ given by
\[ K_n := B_{1,i(n,k_0)} =\overline{B}\left( a_{i(n,k_0)}, \dfrac{1}{|f(a_{i(n,k_0)})|^{1/N} 2^{i(n,k_0)+1}} \right).\]
For every $x \in K_n$ we have:
\[ H_l(x) = \alpha_0^l f(x) + \sum_{k=1}^\infty \alpha_k^l f_k(x) = \alpha_0^l f(x) + \alpha_{k_0}^l f_{k_0}(x) \to h(x) \quad (l \to \infty).\]
Hence, by taking $x = a_{i(n,k_0)}$, and recalling that $\alpha_0 = 0$, we have that
\[ \alpha_{k_0}^l f_{k_0}(a_{i_{(n,k_0)}}) =  \alpha_{k_0}^l f_{k_0}(a_{i(n,k_0)}) + \alpha_0^l f(a_{i(n,k_0)}) \to  h(a_{i(n,k_0)}) \qquad (l \to \infty).\]
Thus, we have
\[  \alpha_{k_0}^l \to \dfrac{h(a_{i(n,k_0)})}{f(a_{i(n,k_0)})} =: \alpha_{k_0} \quad (l \to \infty).\]
Observe that we can assume that we fixed a $k_0$ for which $\alpha_{k_0} \ne 0$. Otherwise, we have that $\alpha_{k_0} =0 $ for any $k_0 \in \N$ and, as $\alpha_0=0$, $H_l \to 0 $ ($l \to \infty$) pointwise in $\R^N$. This is a contradiction because $H_l \to h \ne 0$ ($l \to \infty$) in $(X,d_X)$.

We can then evaluate $h$ at these points, obtaining:
\[|h(a_{i(n,k_0)})| = |\alpha_{k_0}|\cdot | f(a_{i(n,k_0)})| \to +\infty \quad (n \to \infty).\]
Thus $h \in nBC^\infty I(A)$, and so $\overline{W_f} \backslash \{0\} \subset nBC^\infty I(A)$ as desired.
\end{enumerate}
\end{proof}

As a direct consequence of this result, we have shown the following:

\begin{corollary}\label{pointwiseclineability}
The family $nBC^\infty I(A)$ is maximal pointwise lineable in $X$.
\end{corollary}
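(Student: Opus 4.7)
The plan is to deduce this corollary as an immediate formal consequence of Theorem~\ref{pointwisecspaceability}, with no new construction required.

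First, I would note that the notion of pointwise $\alpha$-spaceability is, by definition, just pointwise $\alpha$-lineability enhanced by the requirement that the witnessing subspace $W_x$ be topologically closed. Hence, forgetting the closedness condition, every closed $\alpha$-dimensional subspace $W_x$ with $x \in W_x \subset M \cup \{0\}$ is in particular an $\alpha$-dimensional subspace with the same inclusion, so pointwise $\alpha$-spaceability trivially implies pointwise $\alpha$-lineability for any $\alpha$.

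Second, I would invoke the observation stated just before Theorem~\ref{pointwisecspaceability} that $\dim(X) = \mathfrak{c}$. Under this equality, pointwise $\mathfrak{c}$-lineability is by definition maximal pointwise lineability. Combining this with Theorem~\ref{pointwisecspaceability}, which already produces, for each $f \in nBC^\infty I(A)$, a closed subspace $W_f$ of dimension $\mathfrak{c}$ contained in $nBC^\infty I(A) \cup \{0\}$ and passing through $f$, the conclusion follows at once by taking the same $W_f$ as a (not necessarily closed) witnessing subspace.

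There is no substantial obstacle here: the argument is purely a matter of unfolding definitions, and all the genuine content of the statement has already been established in the construction of the operator $T\colon \ell_1 \to nBC^\infty I(A) \cup \{0\}$ inside the proof of Theorem~\ref{pointwisecspaceability}.
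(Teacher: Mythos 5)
Your argument is correct and matches the paper exactly: the corollary is stated there as a direct consequence of Theorem~\ref{pointwisecspaceability}, since the subspace $W_f = T(\ell_1)$ constructed in its proof already witnesses pointwise $\mathfrak{c}$-lineability, and $\dim(X)=\mathfrak{c}$ makes this maximal. Nothing further is needed.
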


Now, by Proposition \ref{infinitepointwise}:

\begin{corollary}\label{infinitelypointwiseclineability}
The family $nBC^\infty I(A)$ is infinitely pointwise $\mathfrak{c}$-lineable in $X$.
\end{corollary}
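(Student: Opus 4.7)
The plan is essentially a one-line invocation of the equivalence established in Proposition \ref{infinitepointwise}, so the proposal reduces to verifying that its hypotheses are in place and then quoting it. First I would note that by Corollary \ref{pointwiseclineability} the family $nBC^\infty I(A)$ is maximal pointwise lineable in $X$, which, given that $\dim(X)=\mathfrak{c}$, means precisely that it is pointwise $\mathfrak{c}$-lineable. Since $\mathfrak{c}\geq\aleph_0$, Proposition \ref{infinitepointwise} applies with $M=nBC^\infty I(A)$ and $\alpha=\mathfrak{c}$, and the conclusion is immediate.

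There is no genuine obstacle here, because the bulk of the work has already been done in Theorem \ref{pointwisecspaceability}: for each fixed $f\in nBC^\infty I(A)$ we have produced a vector space $W_f\subset nBC^\infty I(A)\cup\{0\}$ with $f\in W_f$ and $\dim(W_f)=\mathfrak{c}$. The only thing Proposition \ref{infinitepointwise} contributes is the combinatorial splitting of a Hamel basis of $W_f$ (modulo $f$) into infinitely many disjoint pieces $I_k$ of cardinality $\mathfrak{c}$, and the observation that the spaces $W_k := \mathrm{span}(\{\omega_i : i\in I_k\}\cup\{f\})$ satisfy $\dim(W_k)=\mathfrak{c}$, $f\in W_k\subset nBC^\infty I(A)\cup\{0\}$, and $W_k\cap W_l=\mathrm{span}\{f\}$ for $k\ne l$. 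This splitting is purely set-theoretic and uses nothing about the analytic nature of $X$.

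If desired, one could present this slightly more concretely by recalling from the proof of Theorem \ref{pointwisecspaceability} that $W_f = T(\ell_1)$ for the injective linear map $T\colon\ell_1\to nBC^\infty I(A)\cup\{0\}$ given by $T((\alpha_k)_{k\geq 0}) = \alpha_0 f + \sum_{k\geq 1}\alpha_k f_k$. Partitioning the index set $\{1,2,3,\ldots\}$ of coordinates of $\ell_1$ into infinitely many pairwise disjoint infinite blocks $J_1,J_2,\ldots$ and setting $W_k := T\bigl(\{(\alpha_j)_{j\ge 0}\in\ell_1 : \alpha_j=0 \text{ for } j\ge 1,\ j\notin J_k\}\bigr)$ yields an explicit infinite family of $\mathfrak{c}$-dimensional subspaces, each containing $f$, each contained in $nBC^\infty I(A)\cup\{0\}$, and pairwise intersecting only in $\mathrm{span}\{f\}$ (by the injectivity of $T$). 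Either route finishes the proof.
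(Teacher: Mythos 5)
Your proposal is correct and matches the paper exactly: the paper derives this corollary in one line from Corollary \ref{pointwiseclineability} (pointwise $\mathfrak{c}$-lineability) together with Proposition \ref{infinitepointwise}. The optional concrete splitting via blocks of $\ell_1$-coordinates is a valid instance of the same set-theoretic partition argument and adds nothing essentially new.
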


If we take the topological structure of the space $X$ into account, the characterization provided in Theorem  \ref{infinitelypointwiselineabilitytodense} let us get density also.
\begin{theorem}\label{pointwisedenseclineability}
The family $nBC^\infty I(A)$ is infinitely pointwise $\mathfrak{c}$-dense lineable in $(X,d_X)$.
\end{theorem}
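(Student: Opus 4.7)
The plan is to derive the statement from Theorem~\ref{infinitelypointwiselineabilitytodense} applied to $M := nBC^\infty I(A)$. Pointwise $\mathfrak{c}$-lineability of $M$ has already been established in Corollary~\ref{pointwiseclineability}, so the task reduces to producing a companion set $N \subset X$ satisfying hypotheses (i)--(iii) of that theorem, together with confirming that $(X, d_X)$ is a metrizable separable topological vector space.

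The natural candidate is
$$N := \{ g \in X : g \text{ is bounded on } A \}.$$
Condition (ii), $M \cap N = \varnothing$, is immediate from the definitions. For condition (i), if $f \in M$ is unbounded on $A$ and $g \in N$ is bounded on $A$, then the decomposition $f = (f+g) - g$ forces $f+g$ to be unbounded on $A$; hence $f + g \in M$, giving $M + N \subset M$.

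The substantive step is condition (iii), the dense-lineability of $N$. I would take $V := C_c^\infty(\R^N)$, which lies in $N$ since every compactly supported continuous function is bounded on $\R^N$. To establish denseness of $V$ in $(X, d_X)$, given $f \in X$ I would invoke the Smooth Urysohn Lemma (as in Example~\ref{example}) to obtain $\chi_n \in C_c^\infty(\R^N)$ with $\chi_n \equiv 1$ on $\overline{B(0,n)}$ and $\chi_n \equiv 0$ off $\overline{B(0,n+1)}$, and show $\chi_n f \to f$ in $d_X$. For each seminorm $\|\cdot\|_m$, once $n > m$ one has $\chi_n f = f$ on $\{\|x\| \leq m\}$, so $\|\chi_n f - f\|_m = 0$ eventually; for each $L^p$-norm, the majorant $|\chi_n f - f|^p \leq |f|^p \in L^1$ and pointwise convergence to $0$ trigger dominated convergence.

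The step I expect to demand the most care is separability of $(X, d_X)$. I would establish it by exhausting $\R^N$ by closed balls: for each $R \in \N$, the space $C_c^\infty(\overline{B(0,R)})$ with its usual $C^\infty$-topology is a separable Fréchet space, and on this subspace the $X$-topology is no finer, since each $L^p$-norm is dominated by $(\mathrm{vol}\,\overline{B(0,R)})^{1/p}$ times the sup-norm. A countable dense subset of each $C_c^\infty(\overline{B(0,R)})$ therefore remains dense in the $X$-topology, and taking the union over $R \in \N$ yields a countable dense subset of $C_c^\infty(\R^N)$, which by the preceding paragraph is dense in $X$. With all hypotheses of Theorem~\ref{infinitelypointwiselineabilitytodense} verified, the desired infinite pointwise $\mathfrak{c}$-dense lineability follows at once.
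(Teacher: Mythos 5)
Your proposal is correct and follows essentially the same route as the paper: both apply Theorem~\ref{infinitelypointwiselineabilitytodense} with the compactly supported smooth functions as the dense companion (the paper takes $N:=C_c^\infty(\R^N)$ outright, while you take the slightly larger set of functions bounded on $A$ and witness its dense-lineability by the same subspace $C_c^\infty(\R^N)$). Your extra work verifying density of $C_c^\infty(\R^N)$ in $(X,d_X)$ and separability of $X$ fills in details the paper merely ``recalls,'' and it is sound.
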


\begin{proof}
Recall that the set $N := C_c^\infty (\R^N)$ of smooth functions with compact support in $\R^N$ is a dense vector space in $(X, d_X)$. If we take $M:=nBC^\infty I(A)$ it is clear that $M+N \subset M$ and $M\cap N = \varnothing$ because both are subsets of $X$, the functions in $M$ are unbounded, and the ones in $N$ are bounded. By Corollary \ref{infinitelypointwiseclineability}, $M$ is infinite pointwise $\mathfrak{c}$-lineable in $X$. Thus, an application of Theorem \ref{infinitelypointwiselineabilitytodense} completes the proof.
\end{proof}

\section{Final remarks}
\begin{enumerate}
\item[{\rm (1)}] From the proof of Theorem \ref{pointwisecspaceability} we can deduce the following result:

\vspace{0.2cm}
{\it For any pre-fixed not relatively compact subset $A$ of $\R^N$, the set
$$nBC^\infty (A) :=\{ f \in C^\infty (\R^N): \ f \text{ is unbounded in } A\}$$
is maximal pointwise spaceable in $C^\infty (\R^N)$.}

\vspace{0.2cm}
In fact, for every $f \in nBC^\infty (A)$, there exists a $\mathfrak{c}$-dimensional subspace $V_f$ such that
$$\overline{{\rm span}(f) \oplus V_f} \subset nBC^\infty (A) \qquad {\rm and} \qquad V_f \subset \bigcap_{p \geq 1} L^p(\R^N).$$

\item[{\rm (2)}] From Corollary \ref{infinitelypointwiseclineability}, it is trivial that the set $nBC^\infty I(A)$ is infinitely pointwise $\mathfrak{c}$-lineable in $C^\infty (\R^N)$. As the set $C_c^\infty(\R^N)$ is dense in $C^\infty(\R^N)$ (endowed with the topology of uniform convergence on compacta for all derivatives of all orders), we have:

\vspace{0.2cm}
{\it The set $nBC^\infty I(A)$ is infinitely pointwise $\mathfrak{c}$-dense lineable in $C^\infty(\R^N)$.}

\vspace{0.2cm}
\item[{\rm (3)}] Let $\alpha: [0, + \infty) \to [1, + \infty)$ be a continuous increasing function. We say that a function $f\in C(\R^N)$ has growth $\alpha$ through the set $A$ whenever
$$\limsup_{\substack{||x|| \to \infty\\x\in A }} \frac{|f(x)|}{\alpha (||x||)} = + \infty .$$
If in Example \ref{example} we modify the definition of the function $w(x)$ as follows:
$$w(x):= \sum_{n=1}^\infty n \cdot \alpha (||a_n||) \cdot \Phi_n(x) \qquad (x \in \R^N),$$
(where in the pre-fixed sequence $(a_n)_n \subset A$ we also assume that $||a_n|| >1$ for any $n\in \N$) we obtain a function in the vector space $X$ that has growth $\alpha$ through the set $A$.

Now, if we consider any fixed function $f $ as above, and we choose a sequence $(a_n)_n \subset A$ such that $\frac{|f(a_n)|}{\alpha (||a_n||)} \to + \infty $ as $n \to \infty$, we can follow all the same steps as in the proof of Theorem \ref{pointwisecspaceability} to obtain the next result:

\begin{theorem} Let $A$ be not relatively compact in $\R^N$ and let $\alpha: [0, + \infty) \to [1, + \infty)$ be a continuous increasing function. Then the set
$$ \{ f \in X: f \text{ has growth } \alpha \text{ through the set } A \}$$
is pointwise $\mathfrak{c}$-spaceable and infinitely pointwise $\mathfrak{c}$-dense lineable in $(X, d_X)$.
\end{theorem}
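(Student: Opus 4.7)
The plan is to mimic the argument of Theorem \ref{pointwisecspaceability}, tracking carefully what has to change when unboundedness is strengthened to growth $\alpha$. Throughout, write $G_\alpha(A)$ for the set in the statement, and recall that $\alpha \geq 1$ on $[0,+\infty)$.

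First, I would fix $f \in G_\alpha(A)$ and extract a sequence $(a_n)_n \subset A$ with $\|a_n\|$ strictly increasing to $+\infty$, $\|a_{n+1}-a_n\| \geq 1$, and $|f(a_n)|/\alpha(\|a_n\|) \to +\infty$; in particular we may assume $|f(a_n)|>1$. Exactly as in the previous theorem, the smooth Urysohn lemma provides bump functions $\Phi_n$ supported in the closed ball of radius $(|f(a_n)|^{1/N} 2^{n+1})^{-1}$ around $a_n$ and equal to $1$ on the inner half-radius ball. These have pairwise disjoint supports and belong to $X$. After splitting $\N = \{j(n): n\in\N\} \cup \bigcup_{k \geq 1}\{i(n,k): n\in\N\}$ into strictly increasing sequences and setting
$$f_k(x) := \sum_{n=1}^\infty f(a_{i(n,k)})\, \Phi_{i(n,k)}(x),$$
I would introduce the very same operator $T: \ell_1 \to X$ by
$$T((c_k)_{k=0}^\infty) := c_0 f + \sum_{k=1}^\infty c_k f_k,$$
writing $c_k$ for the scalars to avoid clash with the growth function. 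The verifications that each $f_k \in X$, that $T$ is well defined, linear, and injective, and that $W_f := T(\ell_1) \subset G_\alpha(A) \cup \{0\}$, are essentially word-for-word the ones in Theorem \ref{pointwisecspaceability}; one only has to notice that $|f_k(a_{i(n,k)})|/\alpha(\|a_{i(n,k)}\|) = |f(a_{i(n,k)})|/\alpha(\|a_{i(n,k)}\|) \to +\infty$, so that $f_k \in G_\alpha(A)$.

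The main, and essentially only, point that requires real attention is the inclusion $\overline{W_f}\setminus\{0\} \subset G_\alpha(A)$. For $h \in \overline{T(\ell_1)} \setminus \{0\}$ with $H_l \to h$ in $(X,d_X)$, uniform convergence on the compact sets $\{a_{j(n)}\}$ and $B_{1, i(n,k_0)}$ yields, repeating the computation of Theorem \ref{pointwisecspaceability}, scalars $c_0$ and $c_{k_0}$ such that
$$h(a_{j(n)}) = c_0\, f(a_{j(n)}), \qquad h(a_{i(n,k_0)}) = c_{k_0}\, f(a_{i(n,k_0)})$$
for every $n \in \N$. Since $h \neq 0$, either $c_0 \neq 0$ or some $c_{k_0} \neq 0$; dividing the appropriate identity by $\alpha(\|\cdot\|)$ and letting $n\to\infty$ gives $\limsup_n |h(a_\bullet)|/\alpha(\|a_\bullet\|) = +\infty$ along the corresponding subsequence in $A$, whence $h \in G_\alpha(A)$. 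This settles the pointwise $\mathfrak{c}$-spaceability.

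For the infinite pointwise $\mathfrak{c}$-dense lineability I would invoke Theorem \ref{infinitelypointwiselineabilitytodense} with $M := G_\alpha(A)$ and $N := C_c^\infty(\R^N)$. Pointwise $\mathfrak{c}$-lineability of $M$ already follows from the spaceability proved above; $N$ is a dense vector subspace of $(X,d_X)$; every $g \in N$ is bounded so $|g(x)|/\alpha(\|x\|) \leq \|g\|_\infty$ stays bounded, forcing $g \notin M$ and hence $M \cap N = \varnothing$; and for $f \in M$, $g \in N$ the sum $f+g$ agrees with $f$ outside the compact support of $g$, so the defining $\limsup$ is unaffected and $M + N \subset M$. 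All hypotheses of Theorem \ref{infinitelypointwiselineabilitytodense} are thus met, and the conclusion follows.
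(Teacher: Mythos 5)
Your proposal is correct and follows essentially the same route as the paper, which itself only sketches this result by saying one should rerun the proof of Theorem \ref{pointwisecspaceability} with a sequence $(a_n)_n\subset A$ satisfying $|f(a_n)|/\alpha(\|a_n\|)\to+\infty$, and then apply Theorem \ref{infinitelypointwiselineabilitytodense} with $N=C_c^\infty(\R^N)$ exactly as in Theorem \ref{pointwisedenseclineability}. Your treatment of the two points that actually change (dividing the identities $h(a_{j(n)})=c_0 f(a_{j(n)})$, $h(a_{i(n,k_0)})=c_{k_0} f(a_{i(n,k_0)})$ by $\alpha(\|\cdot\|)$, and using $\alpha\ge 1$ plus compact supports to check $M\cap N=\varnothing$ and $M+N\subset M$) is precisely what is needed.
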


\item[{\rm (4)}] Let $\Omega$ be an open subset of $\R^N$. We consider
$$ X_\Omega := C^\infty (\Omega) \ \cap \  \bigcap_{p \geq 1} L^p(\Omega),$$
which is a Fr\'echet topological vector space under the metric $d_{X, \Omega}$ defined as
$$d_{X, \Omega}(f,g):=  \sum_{m=1}^\infty \frac{1}{2^m} \cdot \frac{||f-g||_{K_m}}{1 + ||f-g||_{K_m}} + \sum_{p=1}^\infty \frac{1}{2^p} \cdot \frac{||f-g||_{L^p(\Omega)}}{1 + ||f-g||_{L^p(\Omega)}}.$$
where $(K_m)_m$ is an exhaustive sequence of compact subsets in $\Omega$ ($K_m \subset K_{m+1}^\circ$ and $\Omega = \cup_{m=1}^\infty K_m$) and
$$||f||_{K_m} = \max_{|\alpha | \leq m} \sup_{ x \in K_m} |D^\alpha f(x) | \qquad (f \in C^\infty (\Omega), \ m=1,2,\dots).$$
Then the results of Section 3 hold for the set $$nBC^\infty I(A, \Omega):= \{f \in X_\Omega: f \text{ is unbounded in } A\},$$
where $A$ is a not relatively compact subset in $\Omega$. Indeed, for fixed $f\in nBC^\infty I(A, \Omega)$ there exist two sequences $(a_n )_n \subset A$ and $(r_n)_n \subset (0, 1)$ such that:
\begin{enumerate}
\item $(r_n)_n$ is strictly decreasing to zero,
\item $B(a_n, r_n/2) $ is contained in $\Omega$, for each $n \in \N$,
\item $(a_n)_n$ tends to the boundary of $\Omega$ ($n \to \infty$),
\item $||a_{n+1}-a_n|| > r_n$ for each $n \in \N$, and
\item $|f(a_n)| >1$ for each $n \in \N$.
\end{enumerate}
For any $n \in \N $ we consider the closed balls:
$$B_{1,n} ;= \overline{B} \left( a_n, \frac{r_n}{|f(a_n)|^{1/N} \cdot 2^{n+1}} \right),$$
$$B_{2,n} ;= \overline{B} \left( a_n, \frac{r_n}{|f(a_n)|^{1/N} \cdot 2^{n+2}} \right).$$
All these balls are contained in $\Omega$ and are pairwise disjoint. Now we can follow the same steps as in the proof of Theorem 3.2 to get maximal pointwise spaceability of $nBC^\infty I(A, \Omega)$.

\item[{\rm (5)}] Finally, we can apply our Theorem \ref{infinitelypointwiselineabilitytodense} to establish the infinite pointwise dense-lineability of any set for which its pointwise lineability is already known and for which we can find a suitable set $N$. For instance:
    \begin{itemize}
      \item In \cite{pellegrinoraposo2021} it is proved the pointwise $\mathfrak{c}$-lineability of $\ell_p(X)\setminus\bigcup_{q<p}\ell_q(X)$ (where $X$ is any Banach space); taking $N:=c_{00}(X)$, we get its infinite pointwise $\mathfrak{c}$-dense lineability in $\ell_p(X)$.
      \item In \cite{favaropellegrinoraposoribeiroPREPRINT} it is proved that the set $\mathcal{A}_0$ of sequences of continuous unbounded and integrable functions in $[0,+\infty)$ that goes to zero both in $L_1$-norm and uniformly in compacta of $[0,+\infty)$ is pointwise $\mathfrak{c}$-lineable. Taking as $N$ the set $c_{00}(B)$ of Lemma 3.1 in \cite{calderongerlachprado2019}, we also get the infinite pointwise $\mathfrak{c}$-dense lineability of $\mathcal{A}_0$ in $c_0(L_1[0,+\infty)\cap\mathcal{C}[0,+\infty))$.
    \end{itemize}

\end{enumerate}
%%%%%%%%%%%%%%%%%%%%%%%%%%%%%%%%%%%%%%%%%%%%%%%%%%%%%%%%%%%%%%%%%%%%%%%%%%%%%%%%%%%%%%%

\bigskip

\bigskip

{\scriptsize
\begin{flushleft}
M.C.~Calder\'on-Moreno and J.A.~Prado-Bassas\\
Departamento de An\'alisis Matem\'atico and Instituto de Matem\'aticas IMUS.  \\
Facultad de Matem\'aticas, Universidad de Sevilla. \\
Avda.~Reina Mercedes s/n, 41012 Sevilla, Spain.  \\
E-mail: {\tt mccm@us.es} and {\tt bassas@us.es}

P.J.~Gerlach-Mena \\
Departamento de Estad\'{\i}stica e Investigaci\'on Operativa.  \\
Facultad de Matem\'aticas, Universidad de Sevilla. \\
Avda.~Reina Mercedes s/n, 41012 Sevilla, Spain.  \\
E-mail: {\tt gerlach@us.es}
\end{flushleft}}
\end{document}